\DeclarePairedDelimiter\ceil{\big\lceil}{\big\rceil}
\DeclarePairedDelimiter\floor{\big\lfloor}{\big\rfloor}
\pgfplotsset{compat=1.15}
\newtheorem{theorem}{Theorem}[section]
\newtheorem{coro}{Corollary}[theorem]
\newtheorem{definition}[theorem]{Definition}
\newtheorem{obs}[theorem]{Observation}
\begin{document}
	
	\begin{center}
		{\large \bf {Some new generalizations of Domination using restrictions on degrees of vertices}}\\
		{\large\vspace{0.1in} 
			Shyam S. Kamath\footnote{Corresponding author} and Nithya Muraleedharan$^2$}\\
		\vspace{0.1in}
		{\small \it $^{1,2}$ Department of Mathematical and Computational Sciences \\
			National Institute of Technology Karnataka, Surathkal, \\
			Srinivasnagar, Mangalore - 575 025, India}\\
		{\small \it E-mails: $ ^1 $shyam@nitk.edu.in, 
			$ ^2 $nithyamtr@gmail.com}
	\end{center}
	
	\begin{abstract}
		\noindent A set $D$ of vertices in a graph $G=(V,E)$ is a \textit{degree restricted dominating set} for $G$ if each vertex $v_i$ in $D$ is dominating atmost $g(d_i)$ vertices of $V-D$, where $g$ is a function restricting the degree value $d_i$ with respect to the given function value $k_i$ for a natural valued function $f$ from the vertex set of the graph. We define three different types of Degree Restricted Domination by varying the way how the restricted function $g(v_i)$ is defined. If $g(d_i)=\ceil {\frac{d_i}{k_i}}$, the corresponding domination is called the \textit{ceil degree restricted domination}, in short, $CDRD$, and the dominating set obtained in this manner is the $CDRD$-set. If $g(d_i)=\floor{\frac{d_i}{k_i}}$ or $g(d_i)=d_i-k_i+1$, then the corresponding dominations are respectively called the \textit{floor degree restricted domination}, in short $FDRD$, or the \textit{translate degree restricted domination}, $TDRD$. The dominating sets obtained in this manner are the $FDRD$-set and the $TDRD$-set respectively. In this paper, we introduce these new generalizations of the domination number in line with the different $DRD$-sets and study these types of domination for some classes of graphs like complete graphs, caterpillar graphs etc. Degree restricted domination has a vital role in retaining the efficiency of nodes in a network and has many interesting applications. \\ 
		
		\vspace{2mm}
		
		\noindent\textit{Keywords}: Graph Domination, Degree Restricted Domination, Ceil Degree Restricted Domination, Floor Degree Restricted Domination, Translate Degree Restricted Domination.
		
		\vspace{2mm}
		
		\noindent\textsc{2020 Mathematics Subject Classification:} 05C07, 05C69
		
	\end{abstract}


	\section{Introduction}

		Let $G=(V,E)$ be a graph with order $n$ and size $m$, where $V=\{v_1,v_2, \hdots, v_n\}$. The \textit{degree} of a vertex $v_i \in V$ is the number of edges incident with it and is denoted by $d_G(v_i)$ or $d_i$.	A \textit{caterpillar} graph is a tree which can be obtained from a path by adding pendant edges with its vertices. The initial path sans the pendant vertices is called the \textit{spine} of the caterpillar.\\
		Two vertices $v_i$ and $v_j$ \textit{dominate} each other in a graph $G=(V,E)$ if $v_i$ and $v_j$ are adjacent in $G$, i.e., $v_iv_j \in E$. A set $D \subseteq V$ in a graph $G$ is called a \textit{dominating set} if every vertex in $V-D$ is dominated by atleast one vertex in $D$. Property of domination is superhereditary and so, the minimal dominating sets are of much importance. The minimum cardinality of a minimal dominating set is called the \textit{domination number}, denoted as $\gamma(G)$.\\
		In networks, to retain the efficiency of those nodes which are in contact with more number of nodes, we may have to restrict the transfer of data only through a certain pairs of nodes. This restriction can be done in various forms. If the number of such data transformation is restricted equally at every node, that is, if every vertex $v_i$ can dominate atmost $\ceil{\frac{d_i}{k}}$ vertices adjacent to it, then such a domination is called \textit{$k$-part degree restricted domination} \cite{kamath20162drd,kamath2019relation}. But practically such restriction need not be uniform. It can vary depending on the situation on the type of the network and its applications. In this paper, we model some such restrictions through graphs.\\
		The reader is referred to \cite{west2001introduction} for the notations and terminologies and \cite{haynes1998fundamentals,haynes1998domination} for the domination concepts. 
	
	\section{Main Results} \label{sec1}
	
	\subsection{Degree Restricted Domination}
		\begin{definition}
			Let $G=(V,E)$ be a graph with vertex set $V=\{v_1,v_2,\hdots,v_n\}$ and let the degree sequence be $(d_1,d_2,\hdots,d_n)$ where $d_i=d(v_i)$. Suppose $f:V \rightarrow \mathbb{N}$ is a function defined as $f(v_i)=k_i$, where $1 \leq k_i \leq d_i$ and $f(v_i)=1$ if $v_i$ is an isolated vertex. A dominating set $D \subseteq V$ is a degree restricted dominating set for the graph $G$ if each vertex $v_i$ in $D$ is dominating atmost $g(d_i)$ vertices of $V-D$, where $g$ is a function restricting the degree value $d_i$ with respect to the given function $f$.
		\end{definition}
		\noindent
		By varying the way the function $g$ is defined we get different generalizations for the dominating sets. We define here three types of degree restricted domination.
	\subsubsection{Ceil Degree Restricted Domination(CDRD)}
	If $g(d_i)=\ceil{\frac{d_i}{k_i}}$, the corresponding domination is called the \textit{ceil degree restricted domination}, in short $CDRD$, and a dominating set obtained in this manner is a $CDRD$-set. The minimum cardinality of a $CDRD$-set, the \textit{$CDRD$ number} of $G$ is denoted as $\gamma_{\overline{f}}(G)$ or $\gamma_{\overline{f}}$. A $CDRD$-set with minimum cardinality is a $\gamma_{\overline{f}}$-set.
	\begin{obs}
		\begin{enumerate}
			\item If $k_i=1$, for all $v_i \in V$, then the $CDRD$ is same as the fundamental domination. Thus $\gamma_{\overline{f}}(G)=\gamma(G)$ in this case.
			\item If $k_i=k$, where $k \leq \delta(G)$ for each $v_i \in V$, then the corresponding domination is the $k$-part degree restricted domination defined in \cite{kamath2019relation}.
		\end{enumerate}
	\end{obs}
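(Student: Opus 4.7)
The plan is to prove both parts of the observation by direct substitution into the definition of $g(d_i)=\ceil*{\frac{d_i}{k_i}}$ and then verifying that the resulting restricted domination concept coincides with the claimed one. Since both statements are essentially unpacking definitions, the main ``obstacle'' is just checking that the restrictions imposed are indeed vacuous (for part 1) or agree with the previously defined notion (for part 2).

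For part 1, I would start by setting $k_i=1$ for every $v_i\in V$, so that $g(d_i)=\ceil*{\frac{d_i}{1}}=d_i$. Then observe that a vertex $v_i$ placed in a CDRD-set $D$ has at most $d_i$ neighbours in $G$, hence at most $d_i$ neighbours in $V-D$, so the upper bound $g(d_i)=d_i$ is automatically satisfied. Therefore every dominating set is a CDRD-set, and every CDRD-set is a dominating set, giving the set equality of the two families and hence $\gamma_{\overline{f}}(G)=\gamma(G)$. I would also briefly address the isolated-vertex case, where the definition already forces $f(v_i)=1$ and $v_i$ must lie in $D$, which is consistent with ordinary domination.

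For part 2, I would set $k_i=k$ with $k\le \delta(G)$ for every $v_i\in V$. Then $f(v_i)=k$ satisfies $1\le k\le d_i$ (since $d_i\ge \delta(G)\ge k$), so the function $f$ is well-defined in the sense of the definition. Substituting into $g$ gives $g(d_i)=\ceil*{\frac{d_i}{k}}$, which is exactly the bound appearing in the definition of the $k$-part degree restricted domination of~\cite{kamath2019relation}. I would note that the constraint on $D$, namely that each $v_i\in D$ dominates at most $\ceil*{\frac{d_i}{k}}$ vertices of $V-D$, is then textually identical to that of $k$-part degree restricted domination, so the two notions coincide.

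The hardest (and really the only) point to be careful about is the feasibility condition $1\le k_i \le d_i$ from the definition: for part 2 this is precisely why the hypothesis $k\le \delta(G)$ is needed, and I would highlight this as the reason the assumption appears in the statement. Otherwise the proof is a one-line substitution in each case, and no nontrivial combinatorics is required.
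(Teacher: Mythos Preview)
Your proposal is correct; the paper states this as an observation without any proof, treating both parts as immediate from the definitions. Your direct substitution into $g(d_i)=\lceil d_i/k_i\rceil$ and the remark that $k\le\delta(G)$ is exactly what makes $f$ well-defined are precisely the details one would supply, and nothing further is needed.
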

	
	\begin{figure}[h]
		\centering
		\begin{tikzpicture}[line cap=round,line join=round,>=triangle 45,x=0.75cm,y=0.75cm]
			\clip(5.,-3) rectangle (15.,2.7);
			\draw [line width=1pt] (6.,0.)-- (8.,2.);
			\draw [line width=1pt] (8.,2.)-- (10.,0.);
			\draw [line width=1pt] (10.,0.)-- (12.,2.);
			\draw [line width=1pt] (10.,0.)-- (14.,0.);
			\draw [line width=1pt] (12.,-2.)-- (10.,0.);
			\draw [line width=1pt] (10.,0.)-- (8.,-2.);
			\draw [line width=1pt] (6.,0.)-- (8.,-2.);
			\draw [line width=1pt] (12.,2.)-- (14.,0.);
			\begin{scriptsize}
				\draw [fill=black] (8.,2.) circle (2.5pt);
				\draw[color=black] (8.,2.4) node {{\normalsize $v_1(2)$}};
				\draw [fill=black] (8.,-2.) circle (2.5pt);
				\draw[color=black] (8.,-2.4) node {{\normalsize $v_6(1)$}};
				\draw [fill=black] (6.,0.) circle (2.5pt);
				\draw[color=black] (5.8,0.5) node {{\normalsize $v_3(1)$}};
				\draw [fill=black] (10.,0.) circle (2.5pt);
				\draw[color=black] (10.,0.7) node {{\normalsize $v_4(3)$}};
				\draw [fill=black] (12.,2.) circle (2.5pt);
				\draw[color=black] (12.,2.4) node {{\normalsize $v_2(2)$}};
				\draw [fill=black] (12.,-2.) circle (2.5pt);
				\draw[color=black] (12.,-2.4) node {{\normalsize $v_7(1)$}};
				\draw [fill=black] (14.,0.) circle (2.5pt);
				\draw[color=black] (14.3,0.5) node {{\normalsize $v_5(2)$}};
			\end{scriptsize}
		\end{tikzpicture}
		\caption[$G_1$]{Graph $G$}
		\label{fig:graph-1}
	\end{figure}
	\noindent
	For the graph $G$ in Fig.\ref{fig:graph-1} with the given function $f(v_1)=2, \, f(v_2)=2,\, f(v_3)=1, \, f(v_4)=3, \, f(v_5)=2, \, f(v_6)=1$ and $f(v_7)=1$, indicated in the parantheses, $v_1,v_2,v_5,v_7$ can dominate atmost one vertex and $v_3,v_4,v_6$ can dominate atmost two vertices in accordance with $CDRD$. Thus $\{v_2,v_3,v_4\}$ forms a minimal $CDRD$ set and which is also a minimum $CDRD$-set. Thus $\gamma_{\overline{f}}(G)=3$.
	
	\subsubsection{Floor Degree Restricted Domination(FDRD)}
	If $g(d_i)=\floor{ \frac{d_i}{k_i} }$, the corresponding domination is the \textit{floor degree restricted domination}, in short $FDRD$, and a dominating set obtained in this manner is a \textit{$FDRD$-set}. The minimum cardinality of a $FDRD$-set, \textit{$FDRD$ number} is denoted as $\gamma_{\underline{f}}(G)$ or $\gamma_{\underline{f}}$. A $FDRD$-set with minimum cardinality is a $\gamma_{\underline{f}}$-set.
	\begin{obs}
		When each $d_i$ is divisible by the corresponding $k_i$, for $i=1,2,\hdots,n$ then $\ceil{ \frac{d_i}{k_i} }=\floor{ \frac{d_i}{k_i}}$ and hence the $CDRD$-set and $FDRD$-set will be same.
	\end{obs}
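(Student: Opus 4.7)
The statement is essentially an immediate consequence of the definitions of the ceiling and floor functions together with the definitions of $CDRD$ and $FDRD$. The plan is to first reduce the claim to a pointwise equality of the two restricting functions, then conclude that equality of the restrictions forces equality of the collections of valid dominating sets.

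First I would recall the elementary fact that for any real number $x$, one has $\lceil x \rceil = \lfloor x \rfloor$ if and only if $x$ is an integer. Applying this with $x = d_i/k_i$, the divisibility hypothesis $k_i \mid d_i$ for every $i \in \{1,2,\ldots,n\}$ makes $d_i/k_i$ an integer, and hence $\lceil d_i/k_i \rceil = \lfloor d_i/k_i \rfloor$ for every vertex $v_i \in V$. In other words, the two restricting functions $g_{\mathrm{C}}(d_i) = \lceil d_i/k_i \rceil$ and $g_{\mathrm{F}}(d_i) = \lfloor d_i/k_i \rfloor$ coincide on every vertex of $G$.

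Next I would argue that this pointwise coincidence of $g$ forces the $CDRD$-sets and $FDRD$-sets of $G$ (with respect to $f$) to be the same family of subsets of $V$. Indeed, by definition a subset $D \subseteq V$ is a $CDRD$-set precisely when $D$ is a dominating set such that each $v_i \in D$ dominates at most $g_{\mathrm{C}}(d_i)$ vertices of $V - D$; analogously for $FDRD$-sets with $g_{\mathrm{F}}$. Since $g_{\mathrm{C}}(d_i) = g_{\mathrm{F}}(d_i)$ for every $i$, the two conditions are identical, and hence $D$ is a $CDRD$-set if and only if it is an $FDRD$-set. Taking the minimum cardinality over these identical families also gives $\gamma_{\overline{f}}(G) = \gamma_{\underline{f}}(G)$, although the statement only asserts equality of the set families themselves.

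There is really no substantive obstacle here: the argument is a one-line unwinding of definitions once the ceiling-equals-floor characterisation is invoked. The only thing to be careful about is the boundary case of isolated vertices, where $d_i = 0$ and by convention $f(v_i) = 1$; in that situation $d_i/k_i = 0$ is still an integer, so the equality $\lceil 0 \rceil = \lfloor 0 \rfloor = 0$ continues to hold and the argument goes through uniformly.
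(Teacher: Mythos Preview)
Your argument is correct and matches the paper's treatment: the observation is stated without proof in the paper, and the essentially identical one-line reasoning (divisibility forces $\lceil d_i/k_i\rceil=\lfloor d_i/k_i\rfloor$, hence the two restriction conditions coincide) appears as the proof of the subsequent Corollary~\ref{cor1}. Your version is simply a more carefully spelled-out form of the same idea.
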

	
	\noindent
	In Fig.\ref{fig:graph-1} $v_1,v_2,v_4,v_5,v_7$ can dominate atmost one vertex and $v_3,v_6$ can dominate atmost two vertices in accordance with $FDRD$. Thus $\{v_2,v_3,v_4\}$ forms a minimal $FDRD$ set and which is also a minimum $FDRD$-set. Thus $\gamma_{\underline{f}}(G)=3$.
	
	\subsubsection{Translate Degree Restricted Domination(TDRD)}
	If $g(d_i)=d_i-k_i+1$, then such a domination is the \textit{translate degree restricted domination}, in short $TDRD$, and such dominating set is called a \textit{$TDRD$-set}. The minimum cardinality of a $TDRD$-set, \textit{$TDRD$ number} is denoted as $\gamma_{f_t}(G)$ or $\gamma_{f_t}$. A $TDRD$-set with minimum cardinality is a $\gamma_{f_t}$-set.\\
	\noindent
	In Fig.\ref{fig:graph-1} $v_1,v_2,v_5,v_7$ can dominate atmost one vertex, $v_3,v_6$ can dominate atmost two vertices and $v_4$ can dominate atmost 3 vertices in accordance with $TDRD$. Thus $\{v_2,v_3,v_7\}$ forms a minimal $TDRD$ set; but is not a minimum $TDRD$-set. Here $\{v_3,v_4\}$ forms a minimum $TDRD$-set and thus $\gamma_{f_t}(G)=2$.
	\begin{obs}
		The newly defined domination varies for the same graph with different function values. Consider the graph $G$ in Fig.\ref{fig:graph-1} with different function value as $f(v_1)=2, \, f(v_2)=2,\, f(v_3)=1, \, f(v_4)=2, \, f(v_5)=2, \, f(v_6)=1$ and $f(v_7)=1$, then $v_4$ can dominate atmost three vertices with respect to $CDRD$ and thus $\gamma_{\overline{f}}(G)=2$ with the $\gamma_{\overline{f}}$-set $\{v_3,v_4\}$.
	\end{obs}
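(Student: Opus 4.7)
The plan is to verify the claim by direct computation on the small graph in Figure~\ref{fig:graph-1}. First I would read off the degree sequence from the picture: $d_1=d_2=d_3=d_5=d_6=2$, $d_4=5$, and $d_7=1$. Under the revised function the only $k_i$ that differs from the earlier $CDRD$ example is $k_4=2$ (previously $3$), so the new ceiling bounds $g(d_i)=\ceil{d_i/k_i}$ evaluate to $g(d_3)=g(d_6)=\ceil{2/1}=2$, $g(d_4)=\ceil{5/2}=3$, and $g(d_i)=1$ for the remaining vertices. The salient change from the original example is that $v_4$ is now allowed to dominate three neighbours instead of two, which is precisely the flexibility being exploited.

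Next I would exhibit $D=\{v_3,v_4\}$ as a valid $CDRD$-set. Here $V-D=\{v_1,v_2,v_5,v_6,v_7\}$, and each such vertex is adjacent to at least one of $v_3,v_4$; concretely, $v_1,v_6\in N(v_3)$ and $v_2,v_5,v_7\in N(v_4)$. Assigning $v_1,v_6$ to $v_3$ and $v_2,v_5,v_7$ to $v_4$, the vertex $v_3$ dominates exactly $2=g(d_3)$ vertices of $V-D$ while $v_4$ dominates exactly $3=g(d_4)$ vertices of $V-D$, so the degree restriction is satisfied and $D$ is indeed a $CDRD$-set of cardinality $2$.

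Finally, for the matching lower bound I would observe that every $CDRD$-set is in particular a dominating set, whence $\gamma_{\overline{f}}(G)\ge\gamma(G)$. A quick inspection shows that no single vertex of $G$ dominates the whole graph (for instance $v_4$ misses $v_3$, and any other vertex fails to reach at least one of $v_2,v_3,v_5,v_7$), so $\gamma(G)\ge 2$. Combined with the set above, this yields $\gamma_{\overline{f}}(G)=2$ with $\gamma_{\overline{f}}$-set $\{v_3,v_4\}$. There is no genuine obstacle here; the only point that needs care is noting that the jump from $g(d_4)=2$ to $g(d_4)=3$ is exactly what makes $\{v_3,v_4\}$ feasible under the new $f$, since under the earlier $f$ the same two-element set would leave one of $v_2,v_5,v_7$ uncovered, forcing $\gamma_{\overline{f}}$ to be strictly larger in that case.
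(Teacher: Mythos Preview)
Your proposal is correct and follows the same approach as the paper, namely a direct verification on the small example; the paper simply asserts the observation without proof, so your write-up in fact supplies more detail than the paper does, in particular the explicit lower bound $\gamma_{\overline{f}}(G)\ge\gamma(G)\ge 2$ and the explicit assignment of $v_1,v_6$ to $v_3$ and $v_2,v_5,v_7$ to $v_4$.
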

	
	\subsection{\textit{DRD} Number}
	In a graph $G$ with the vertex set $\{v_1,v_2,\hdots,v_n\}$, a vertex can dominate maximum number of vertices if $k_i=1$ for every $i$. Then as observed above $\gamma_{\overline{f}}(G)=\gamma(G)$. A vertex $v_i$ can dominate only one of its neighbours when $k_i=d_i$, and thus the $CDRD$ number will be maximum if all the vertices have $k_i=d_i$. In a star graph $K_{1,n-1}$, if $k_i=d_i$ for each vertex, then the central vertex will dominate one of its neighbours and all other vertices must be in the $CDRD$ set. Thus $\gamma_{\overline{f}}(K_{1,n-1})=n-1$. Hence $\gamma \leq \gamma_{\overline{f}} \leq n-1$.
	\begin{theorem}
	\label{thm}
		For any graph $G$, $\gamma(G) \leq \gamma_{\overline{f}}(G) \leq \gamma_{\underline{f}}(G)$.
	\end{theorem}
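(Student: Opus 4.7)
The plan is to prove both inequalities by an inclusion-of-classes argument: showing that every FDRD-set is a CDRD-set, and every CDRD-set is a dominating set, so the minima satisfy the reverse chain of inequalities.

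For the first inequality $\gamma(G)\le\gamma_{\overline{f}}(G)$, I would just unpack the definition: a CDRD-set $D$ is by definition a dominating set (the restriction $g(d_i)=\lceil d_i/k_i\rceil$ is an extra condition imposed on top of being dominating). Hence the family of CDRD-sets is a subfamily of the family of dominating sets, and taking minima reverses the inclusion to give $\gamma(G)\le\gamma_{\overline{f}}(G)$.

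For the second inequality $\gamma_{\overline{f}}(G)\le\gamma_{\underline{f}}(G)$, the key observation is the pointwise comparison $\lfloor d_i/k_i\rfloor\le\lceil d_i/k_i\rceil$ for every $i$. Let $D$ be any $\gamma_{\underline{f}}$-set. Each $v_i\in D$ dominates at most $\lfloor d_i/k_i\rfloor$ vertices of $V-D$; by the inequality just noted, it therefore also dominates at most $\lceil d_i/k_i\rceil$ vertices of $V-D$. Hence $D$ satisfies the CDRD condition, so $D$ is a CDRD-set of $G$. Taking the minimum cardinality over all CDRD-sets gives $\gamma_{\overline{f}}(G)\le|D|=\gamma_{\underline{f}}(G)$.

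There is no real obstacle here; the whole argument is essentially a definition-chase plus the trivial arithmetic fact $\lfloor x\rfloor\le\lceil x\rceil$. The only mild subtlety is remembering that an \emph{upper} bound on the number of vertices dominated produces the \emph{reverse} containment of sets of feasible dominating sets, which is exactly why a tighter restriction (floor) yields a larger minimum than a looser restriction (ceil).
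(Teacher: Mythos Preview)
Your proof is correct and follows essentially the same approach as the paper: both use that CDRD-sets are dominating sets by definition for the first inequality, and the pointwise bound $\lfloor d_i/k_i\rfloor \le \lceil d_i/k_i\rceil$ to conclude that every FDRD-set is a CDRD-set for the second. Your write-up is simply more explicit about the inclusion-of-classes reasoning that the paper leaves implicit.
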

	\begin{proof}
		Any $CDRD$-set or $FDRD$-set is also a dominating set for any graph $G$. Thus $\gamma(G) \leq \gamma_{\overline{f}}(G)$ and $\gamma(G) \leq \gamma_{\underline{f}}(G)$. Also, for any vertex $v_i$ (with the function value $k_i$ and the degree $d_i$), $\floor{ \frac{d_i}{k_i}} \leq \ceil{ \frac{d_i}{k_i} }$ and hence $\gamma_{\overline{f}}(G) \leq \gamma_{\underline{f}}(G)$.\\
		Thus $\gamma(G) \leq \gamma_{\overline{f}}(G) \leq \gamma_{\underline{f}}(G)$.
	\end{proof}
	\begin{coro}
	\label{cor1}
		If $G$ is a graph for which degree $d_i$ of each vertex is divisible by the corresponding function value $k_i$, then $\gamma_{\overline{f}}(G)=\gamma_{\underline{f}}(G)$.
	\end{coro}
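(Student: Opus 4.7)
The plan is to observe that the hypothesis forces the ceiling and floor restrictions to coincide pointwise, so that the two families of dominating sets become identical and hence have the same minimum cardinality.

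First I would invoke the earlier observation: when $k_i$ divides $d_i$, we have $\bigl\lceil \tfrac{d_i}{k_i}\bigr\rceil = \bigl\lfloor \tfrac{d_i}{k_i}\bigr\rfloor$. Under the divisibility assumption, this equality holds simultaneously at every vertex $v_i \in V$. Consequently, the degree-restriction function $g(d_i)$ takes the same value in the $CDRD$ and $FDRD$ settings for every vertex, so a set $D \subseteq V$ satisfies the $CDRD$ constraint if and only if it satisfies the $FDRD$ constraint.

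Next I would conclude that the class of $CDRD$-sets of $G$ coincides exactly with the class of $FDRD$-sets of $G$. Taking minima over identical families gives $\gamma_{\overline{f}}(G) = \gamma_{\underline{f}}(G)$. Alternatively, one can invoke Theorem~\ref{thm} together with the reverse inequality: since every $FDRD$-set is trivially a $CDRD$-set (because $\lfloor d_i/k_i\rfloor = \lceil d_i/k_i \rceil$ under divisibility, so dominating at most $\lfloor d_i/k_i\rfloor$ vertices is the same as dominating at most $\lceil d_i/k_i\rceil$ vertices), we get $\gamma_{\underline{f}}(G) \leq \gamma_{\overline{f}}(G)$, and combining with Theorem~\ref{thm} yields equality.

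There is no real obstacle here; the corollary is essentially a direct restatement of the pointwise equality of the ceiling and floor functions under divisibility, lifted through the definitions of $CDRD$ and $FDRD$. The only thing to be careful about is making explicit that the set of admissible dominating sets (not just the values of $g$) is the same under the hypothesis, so the two minimization problems are identical.
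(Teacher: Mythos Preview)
Your main argument is correct and is essentially the paper's own proof: under the divisibility hypothesis one has $\lceil d_i/k_i\rceil=\lfloor d_i/k_i\rfloor$ at every vertex, so the $CDRD$ and $FDRD$ constraints coincide, the two families of admissible dominating sets are identical, and hence $\gamma_{\overline{f}}(G)=\gamma_{\underline{f}}(G)$.

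One small slip in your ``alternative'' route: the inclusion ``every $FDRD$-set is a $CDRD$-set'' (which in fact holds for \emph{any} $f$, since $\lfloor d_i/k_i\rfloor\le\lceil d_i/k_i\rceil$) yields $\gamma_{\overline{f}}(G)\le\gamma_{\underline{f}}(G)$, not the reverse; that is just Theorem~\ref{thm} again. For the reverse inequality $\gamma_{\underline{f}}(G)\le\gamma_{\overline{f}}(G)$ you need the opposite inclusion, namely that every $CDRD$-set is an $FDRD$-set, and \emph{that} is precisely what the divisibility hypothesis buys you (and is the direction the paper states).
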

	\begin{proof}
		Since every $k_i$ divides $d_i$, we have $\ceil{ \frac{d_i}{k_i} } = \floor{ \frac{d_i}{k_i} }$. So, every CDRD-set must be a FDRD-set and hence the result.
	\end{proof}
	\begin{coro}
	\label{cor2}
		If $G$ is a graph for which degree $d_i$ of each vertex in a $\gamma_{\overline{f}}$ or $\gamma_{\underline{f}}$-set is divisible by the corresponding function value $k_i$, then $\gamma_{\overline{f}}(G)=\gamma_{\underline{f}}(G)$.
	\end{coro}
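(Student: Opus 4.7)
The plan is to exploit Theorem \ref{thm}, which already supplies the inequality $\gamma_{\overline{f}}(G) \leq \gamma_{\underline{f}}(G)$ for free, so only the reverse inequality requires work. The key observation—exactly the one driving Corollary \ref{cor1}—is that $k_i \mid d_i$ forces $\ceil{d_i/k_i} = \floor{d_i/k_i}$, making the CDRD and FDRD degree bounds agree at $v_i$. The present corollary weakens the hypothesis of Corollary \ref{cor1} by requiring this identity to hold only at the vertices of a chosen minimum dominating set, rather than at every vertex of $G$.

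Concretely, I would take a $\gamma_{\overline{f}}$-set $D$ on which the hypothesis holds, so that $k_i \mid d_i$ for every $v_i \in D$. Since $D$ is a CDRD-set, each $v_i \in D$ dominates at most $\ceil{d_i/k_i}$ vertices of $V - D$; by divisibility this number coincides with $\floor{d_i/k_i}$, so the same set $D$ also satisfies the FDRD restriction. Hence $D$ is itself an FDRD-set, giving $\gamma_{\underline{f}}(G) \leq |D| = \gamma_{\overline{f}}(G)$. Combined with Theorem \ref{thm}, the equality $\gamma_{\overline{f}}(G) = \gamma_{\underline{f}}(G)$ follows at once.

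When instead the hypothesis is supplied on a $\gamma_{\underline{f}}$-set, the same ceiling-equals-floor identity on the vertices of that set makes it simultaneously fulfil the CDRD condition with the same numerical bound, and the equality of the two numbers is obtained by the analogous chain of inequalities together with Theorem \ref{thm}. There is no real technical obstacle here; the whole content of the proof is the recognition that the divisibility hypothesis of Corollary \ref{cor1} is only ever invoked at vertices of the minimum dominating set being manipulated, so the global hypothesis can be localized to that set without disturbing the argument.
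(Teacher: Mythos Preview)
The paper states this corollary without proof, so there is no argument to compare against directly; your proposal must be judged on its own.

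Your treatment of the $\gamma_{\overline{f}}$-set case is correct and is exactly the intended localization of Corollary~\ref{cor1}: if $D$ is a $\gamma_{\overline{f}}$-set on which $k_i \mid d_i$ throughout, the CDRD and FDRD bounds coincide on $D$, so $D$ is simultaneously an FDRD-set, whence $\gamma_{\underline{f}}(G) \le |D| = \gamma_{\overline{f}}(G)$, and Theorem~\ref{thm} closes the equality.

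The $\gamma_{\underline{f}}$-set case, however, does not go through. Every FDRD-set is automatically a CDRD-set, since $\lfloor d_i/k_i \rfloor \le \lceil d_i/k_i \rceil$ always; the divisibility hypothesis adds nothing here. Observing that your $\gamma_{\underline{f}}$-set $D'$ satisfies the CDRD condition therefore yields only $\gamma_{\overline{f}}(G) \le |D'| = \gamma_{\underline{f}}(G)$, which is precisely the inequality Theorem~\ref{thm} already provides --- there is no ``analogous chain'' producing the reverse direction. In fact the assertion is false under this branch of the hypothesis: take $K_{1,3}$ with center $c$ and leaves $\ell_1,\ell_2,\ell_3$, and set $f(c)=2$, $f(\ell_j)=1$. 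Then $\gamma_{\overline{f}}=2$ (via $\{c,\ell_1\}$) while $\gamma_{\underline{f}}=3$, yet $\{\ell_1,\ell_2,\ell_3\}$ is a $\gamma_{\underline{f}}$-set in which every vertex has $d_j=k_j=1$, so the divisibility condition holds on it. Only the $\gamma_{\overline{f}}$-set branch of the hypothesis genuinely forces the conclusion.
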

	\begin{coro}
	\label{cor3}
		If $G$ is a graph for which each vertex has the function value $k_i=1$, then $\gamma(G) = \gamma_{\overline{f}} (G) = \gamma_{\underline{f}} (G)$.
	\end{coro}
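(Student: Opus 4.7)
The plan is to observe that choosing $k_i = 1$ for every vertex makes both restrictions degenerate. Substituting $k_i = 1$ yields $\ceil{d_i/k_i} = d_i = \floor{d_i/k_i}$, and since any vertex $v_i$ has only $d_i$ neighbours in $G$ to begin with, the bound ``$v_i$ dominates at most $g(d_i)$ vertices of $V-D$'' is automatically satisfied whenever $D$ is an ordinary dominating set. Hence, with this choice of $f$, every dominating set of $G$ is simultaneously a $CDRD$-set and a $FDRD$-set, and conversely every such set is by definition a dominating set.

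It follows that the three set families coincide and therefore their minimum cardinalities agree, i.e. $\gamma(G) = \gamma_{\overline{f}}(G) = \gamma_{\underline{f}}(G)$. If one prefers to split the argument, the equality $\gamma_{\overline{f}}(G) = \gamma_{\underline{f}}(G)$ can be obtained directly from Corollary~\ref{cor1}, since $k_i = 1$ trivially divides $d_i$ for every $i$, and the remaining equality $\gamma(G) = \gamma_{\overline{f}}(G)$ follows from the observation already recorded at the start of Subsection~2.1.1. I do not foresee any genuine obstacle; the only edge case worth a sentence is that of an isolated vertex, where $d_i = 0$ forces $g(d_i) = 0$ in all three models, but such a vertex must lie in every dominating set regardless, so the equivalence of the three families is preserved.
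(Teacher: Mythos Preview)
Your proposal is correct. The paper states this corollary without proof, evidently regarding it as immediate from Corollary~\ref{cor1} (since $k_i=1$ divides every $d_i$) together with the observation in Subsection~2.1.1 that $k_i=1$ for all $i$ reduces $CDRD$ to ordinary domination; your alternative argument reproduces exactly this reasoning, and your direct argument showing the three set families coincide is an equally valid (and slightly more self-contained) route to the same conclusion.
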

	\noindent
	The corollaries to Theorem \ref{thm} are only the sufficient conditions. Figure \ref{fig:graph-3} is a counter example for the converse of above stated corollaries where the function values for the vertices are indicated in the parantheses.
	\begin{figure}[h]
		\centering
		\begin{tikzpicture}[line cap=round,line join=round,>=triangle 45,x=1.0cm,y=1.0cm]
			\clip(1.,1.) rectangle (8.,5.);
			\draw [line width=1.pt] (2.,4.)-- (5.,4.);
			\draw [line width=1.pt] (5.,4.)-- (5.,2.);
			\draw [line width=1.pt] (5.,2.)-- (2.,2.);
			\draw [line width=1.pt] (2.,2.)-- (2.,4.);
			\draw [line width=1.pt] (5.,2.)-- (7.,2.);
			\begin{scriptsize}
				\draw [fill=black] (2.,4.) circle (2.5pt);
				\draw[color=black] (2.,4.3) node {{\normalsize $v_1(1)$}};
				\draw [fill=black] (2.,2.) circle (2.5pt);
				\draw[color=black] (2.,1.7) node {{\normalsize $v_3(2)$}};
				\draw [fill=black] (5.,2.) circle (2.5pt);
				\draw[color=black] (5.,1.7) node {{\normalsize $v_4(2)$}};
				\draw [fill=black] (5.,4.) circle (2.5pt);
				\draw[color=black] (5,4.3) node {{\normalsize $v_2(2)$}};
				\draw [fill=black] (7.,2.) circle (2.5pt);
				\draw[color=black] (7.,1.7) node {{\normalsize $v_5(1)$}};
			\end{scriptsize}
		\end{tikzpicture}
		\caption{Counter example for the converse of Corollaries to Theorem \ref{thm}}
		\label{fig:graph-3}
	\end{figure}
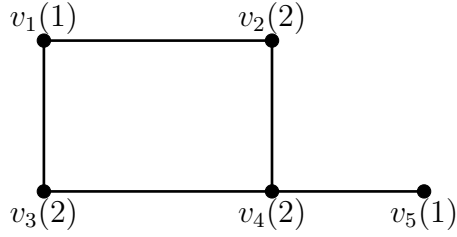
	\begin{theorem}
		For any graph $G$, $\bigg \lceil \frac{n}{1+\ceil{\frac{\Delta}{k}}}\bigg \rceil \, \leq \, \gamma_{\overline{f}} \, \leq \, n-\underset{i \in [n]} {\mbox{max }}\bigg\lceil\frac{d_i}{k_i}\bigg\rceil$ where $k\, =\underset{i \in [n]}{\mbox{min }}k_i$.
	\end{theorem}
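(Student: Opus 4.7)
The statement is a conjunction of a lower and an upper bound on $\gamma_{\overline{f}}$; I would prove the two inequalities independently, the lower bound by a short counting argument and the upper bound by exhibiting an explicit $CDRD$-set of the required size.

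For the lower bound, the plan is to take any $CDRD$-set $D$ and bound from above how many vertices of $V-D$ its members can collectively account for. Each $v_i\in D$ is, by the $CDRD$ restriction, responsible for at most $g(d_i)=\lceil d_i/k_i\rceil$ vertices of $V-D$. Since $k=\min_i k_i$ and $d_i\le\Delta$, we have $d_i/k_i\le\Delta/k$, and monotonicity of the ceiling gives $\lceil d_i/k_i\rceil\le\lceil\Delta/k\rceil$ for every $i$. Summing the capacities over $D$ yields $n-|D|=|V-D|\le|D|\cdot\lceil\Delta/k\rceil$, which rearranges to $n\le|D|(1+\lceil\Delta/k\rceil)$; integrality of $|D|$ then delivers the desired ceiling bound.

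For the upper bound, I would exhibit a $CDRD$-set of size $n-\max_i\lceil d_i/k_i\rceil$ by parking all of the out-of-$D$ vertices at one heavy vertex. Pick $v_j$ with $\lceil d_j/k_j\rceil=\max_i\lceil d_i/k_i\rceil$; because $k_j\ge 1$ we have $\lceil d_j/k_j\rceil\le d_j$, so a subset $S\subseteq N(v_j)$ of exactly $\lceil d_j/k_j\rceil$ neighbours of $v_j$ exists. Let $D=V-S$. Then $v_j\in D$ (as $v_j\notin N(v_j)$), every vertex of $V-D=S$ is adjacent to $v_j$, and if $v_j$ is made responsible for all of $S$ and every other vertex of $D$ is assigned no vertex of $V-D$, the restriction is trivially met at each $v_i\ne v_j$ and is met with equality at $v_j$. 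This gives a $CDRD$-set of cardinality $n-\max_i\lceil d_i/k_i\rceil$, as required.

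The main subtlety, rather than any hard computation, is fixing the correct reading of ``$v_i$ dominates at most $g(d_i)$ vertices of $V-D$''. The worked example of $\{v_2,v_3,v_4\}$ in Figure~\ref{fig:graph-1} already rules out the literal reading $|N(v_i)\cap(V-D)|\le g(d_i)$, since there $|N(v_4)\cap(V-D)|=4>2=\lceil d_4/k_4\rceil$; the intended reading is instead that each vertex of $V-D$ can be assigned to one of its neighbours in $D$ so that no $v_i\in D$ receives more than $g(d_i)$ such assignments. Both halves of the proof above invoke this interpretation, and once it is fixed everything else reduces to the pigeonhole-style count for the lower bound and the single-vertex construction for the upper bound.
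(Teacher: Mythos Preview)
Your proof is correct and follows essentially the same approach as the paper: the lower bound via the pigeonhole-style count that each member of a $CDRD$-set covers at most $\lceil\Delta/k\rceil$ outside vertices, and the upper bound by taking $D=V\setminus S$ for $S$ a set of $\lceil d_j/k_j\rceil$ neighbours of a maximising vertex $v_j$. Your explicit discussion of the assignment reading of ``dominates at most $g(d_i)$ vertices of $V-D$'' is a worthwhile clarification that the paper leaves implicit, but it does not change the underlying argument.
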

	\begin{proof}
		For any vertex $v_i$ in a $\gamma_{\overline{f}}$-set, it can dominate atmost $1+\ceil{\frac{d_i}{k_i}}$ vertices. Maximum possible value for $\ceil{\frac{d_i}{k_i}}$ is when $d_i=\Delta$ and $k_i=k$, where $k =\underset{i \in [n]}{\mbox{min }}k_i$. Thus $\Big \lceil \frac{n}{1+\ceil{\frac{\Delta}{k}}}\Big \rceil \, \leq \, \gamma_{\overline{f}}$.\\
		Let $v_i$ be the vertex where $\ceil{\frac{d_i}{k_i}}$ is maximum. Then all but those $\ceil{\frac{d_i}{k_i}}$ vertices dominated by $v_i$ will form a $CDRD$-set for $G$. Hence $\gamma_{\overline{f}} \, \leq \, n-\underset{i \in [n]} {\mbox{max }}\bigg\lceil\frac{d_i}{k_i}\bigg\rceil$.
	\end{proof}
	The upperbound is attained for the star graph and the lower bound is attained for the complete graph $K_{3p}$ if $k_i\geq\lceil \frac{3p-1}{2}\rceil$ for all $v_i$ or the cycle $C_{3p}$ if $k_i=1$ for all $i \equiv 0\ (\textrm{mod}\ 3)$.
	\begin{theorem}
		For any isolate free graph $G$, $\gamma_{\overline{f}} \leq \beta'$, the edge covering number of $G$.
	\end{theorem}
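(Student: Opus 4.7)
The plan is to exhibit an explicit $CDRD$-set of size exactly $\beta'(G)$. The main tool is the classical structural fact that a minimum edge cover of an isolate-free graph, viewed as a spanning subgraph, may be taken to be a disjoint union of stars. I would first invoke (and briefly justify) this by noting that any path of length three inside an edge cover contains a middle edge whose removal still covers every vertex, so that minimum edge covers are star forests.

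Given such a minimum edge cover decomposing into stars $S_1,\ldots,S_t$ spanning $V$, I would designate one vertex $c_i$ of each $S_i$ as its center and let $L_i$ be the set of leaves of $S_i$. Setting $D=\bigcup_{i=1}^{t}L_i$, a count gives
\[
|D|=\sum_{i=1}^{t}|L_i|=\sum_{i=1}^{t}\bigl(|V(S_i)|-1\bigr)=n-t=|F|=\beta'.
\]
Every vertex of $V\setminus D$ is some center $c_i$, and each such $c_i$ has every vertex of $L_i\subseteq D$ among its neighbors in $G$, so $D$ is a dominating set for $G$.

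The key remaining step is to confirm the ceil degree restriction at every vertex of $D$. The natural way is to send each center $c_i$ to one arbitrarily chosen leaf $\ell_i\in L_i$; under this assignment every $v\in D$ is responsible for at most one vertex of $V\setminus D$. Since the definition of $f$ forces $1\le k_v\le d_v$ and $G$ is isolate-free, we get $\lceil d_v/k_v\rceil\ge 1$ for every $v\in D$, so the restriction is automatically satisfied. Thus $D$ is a $CDRD$-set of cardinality $\beta'$, yielding $\gamma_{\overline{f}}(G)\le\beta'$. I do not anticipate a serious obstacle; the only non-mechanical ingredient is the star-forest structure of a minimum edge cover, and the rest is counting and a one-line check of the ceiling inequality.
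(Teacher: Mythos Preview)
Your proof is correct and follows essentially the same approach as the paper: from a minimum edge cover, select one endpoint of each edge to form a $CDRD$-set of size $\beta'$, with each selected vertex responsible for at most one vertex outside the set (so that the bound $\lceil d_v/k_v\rceil\ge 1$ suffices). You are more explicit than the paper in invoking the star-forest structure of a minimum edge cover to justify both the cardinality count $|D|=\beta'$ and the degree restriction, but the underlying construction is identical.
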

	\begin{proof}
		Let $\{e_1,e_2,\hdots,e_{\beta'}\}$ be a maximum edge covering for the graph $G$. If $e_i$ has the end vertices $v_{i_1}$ and $v_{i_2}$, then the collection of all $v_{i_1}$'s forms a $CDRD$-set for the graph $G$. Thus $\gamma_{\overline{f}} \leq \beta'$.
	\end{proof}
	If all the vertices in the graph has the function value $k_i=d_i$, then the bound mentioned above will be attained by the $CDRD$ number $\gamma_{\overline{f}}$.
	\begin{coro}
		For any isolate free bipartite graph $G$, $\gamma_{\overline{f}} \leq \alpha$, the independence number of $G$.
	\end{coro}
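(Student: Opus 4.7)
The plan is to piggyback directly on the preceding theorem $\gamma_{\overline{f}} \leq \beta'$ and reduce the corollary to the classical identity $\beta'(G) = \alpha(G)$ that holds for isolate-free bipartite graphs. So the work is entirely in relating the edge covering number to the independence number under the bipartite hypothesis; the degree-restricted domination plays no further role beyond invoking the previous theorem.

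First I would cite the previous theorem to get $\gamma_{\overline{f}}(G) \leq \beta'(G)$, which is legitimate because the hypothesis of that theorem (isolate-freeness) is inherited. Then I would invoke the two Gallai identities: for any graph, $\alpha(G) + \tau(G) = n$, and for any isolate-free graph, $\alpha'(G) + \beta'(G) = n$, where $\tau$ is the vertex cover number and $\alpha'$ the matching number. Subtracting these gives $\beta'(G) - \alpha(G) = \tau(G) - \alpha'(G)$, so the corollary reduces to showing $\tau(G) = \alpha'(G)$.

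At this point I would apply K\"onig's theorem, which asserts exactly $\tau(G) = \alpha'(G)$ for bipartite graphs $G$. Combining, $\beta'(G) = \alpha(G)$, and chaining with the previous theorem yields $\gamma_{\overline{f}}(G) \leq \beta'(G) = \alpha(G)$, as desired.

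I do not anticipate any real obstacle here; the only subtlety is making sure the isolate-free hypothesis is used where it is needed, namely for the Gallai identity $\alpha' + \beta' = n$ (which requires every vertex to be covered by some edge) and for the previous theorem. Both K\"onig's theorem and the Gallai identities are standard enough to be cited rather than reproved, so the write-up should be essentially one or two lines.
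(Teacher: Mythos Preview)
Your proposal is correct and matches the paper's intent: the paper states the corollary with no proof, implicitly relying on the preceding theorem $\gamma_{\overline{f}} \leq \beta'$ together with the classical fact $\beta'(G) = \alpha(G)$ for isolate-free bipartite graphs (via the Gallai identities and K\"onig's theorem), exactly as you outline.
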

	\subsubsection{Complete graph $K_n$}
	$CDRD$ number for a complete graph with some particular functions can be determined.
	\begin{theorem}
		For a complete graph $K_n$ if there are atmost $\big \lfloor \frac{2n}{3} \big \rfloor$ vertices with $k_i=d_i$, then $\gamma_{\overline{f}}(K_n) \leq \ceil{ \frac{n}{3} }$.
	\end{theorem}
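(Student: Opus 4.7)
The plan is to build an explicit CDRD-set $D$ of size $\lceil n/3 \rceil$ by exploiting the fact that in $K_n$ every vertex has degree $d_i = n-1$, so the ``capacity'' $\lceil d_i/k_i\rceil = \lceil (n-1)/k_i\rceil$ of a potential dominator is completely controlled by $k_i$. The key observation is the dichotomy: if $k_i = d_i = n-1$, then the capacity is exactly $1$, while if $k_i < d_i$ (equivalently $k_i \le n-2$), then $(n-1)/k_i > 1$ and the capacity is at least $2$. So the strategy is to insist that $D$ contain only ``high-capacity'' vertices.

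To realize this, I would first count: the hypothesis gives at most $\lfloor 2n/3\rfloor$ vertices with $k_i = d_i$, hence at least $n - \lfloor 2n/3 \rfloor = \lceil n/3 \rceil$ vertices with $k_i < d_i$. Pick any $\lceil n/3\rceil$ such vertices and call this set $D$. Then $|V-D| = n - \lceil n/3\rceil = \lfloor 2n/3\rfloor$, and the total CDRD-capacity of $D$ is at least $2\lceil n/3\rceil$. A short case check on $n \bmod 3$ confirms $2\lceil n/3\rceil \ge \lfloor 2n/3\rfloor$, so the total capacity is no less than $|V-D|$.

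Finally, because $K_n$ is complete, every vertex of $D$ is adjacent to every vertex of $V-D$, so any assignment of the $\lfloor 2n/3 \rfloor$ outside vertices to dominators in $D$ that respects the individual capacity bounds is automatically a valid CDRD-assignment; the capacity inequality just proved guarantees such an assignment exists (e.g.\ fill each dominator up to $2$ before moving on). This produces a CDRD-set of cardinality $\lceil n/3\rceil$, yielding the stated bound. I do not expect a substantive obstacle: the argument is essentially a pigeonhole on capacities, and completeness trivializes the usual adjacency difficulty of capacitated domination. The only minor care is the residue-class verification of $2\lceil n/3\rceil \ge \lfloor 2n/3\rfloor$ and the counting identity $n-\lfloor 2n/3\rfloor = \lceil n/3\rceil$, both of which are immediate.
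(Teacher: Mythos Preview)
Your proposal is correct and follows essentially the same approach as the paper: both arguments observe that in $K_n$ a vertex with $k_i<d_i$ has CDRD-capacity at least $2$, use the hypothesis to guarantee at least $\lceil n/3\rceil$ such vertices, and take any $\lceil n/3\rceil$ of them as the dominating set. Your write-up is in fact more careful than the paper's, which omits the explicit verification that $2\lceil n/3\rceil\ge\lfloor 2n/3\rfloor$ and the assignment argument; but the underlying idea is identical.
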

	\begin{proof}
		Since $K_n$ is an $n-1$ regular graph of order $n$, the function value $k_i$ can vary from 1 to $n-1$. Unless $k_i=d_i$, $v_i$ can dominate atleast two vertices. Thus a set of atmost $\ceil{ \frac{n}{3} }$ vertices with $k_i<d_i$ will form a $CDRD$-set for $K_n$. Hence $\gamma_{\overline{f}}(K_n) \leq \ceil{ \frac{n}{3} }$.
	\end{proof}
	\noindent
	This is only a necessary condition for the $CDRD$ number to be bounded for $K_n$, but not sufficient since the presence of a vertex $v_i$ with $k_i=1$ and all other vertices with $k_i=d_i$ will give $\gamma_{\overline{f}}(K_n) \leq \ceil{ \frac{n}{3} }$.\\
	Similar result can be obtained for the $FDRD$ number as below.
	\begin{theorem}
		For a complete graph $K_n$ if there are atmost $\big \lfloor \frac{2n}{3} \big \rfloor$ vertices with $k_i>\ceil{\frac{n-1}{2} }$, then $\gamma_{\underline{f}}(K_n) \leq \ceil{ \frac{n}{3} }$.
	\end{theorem}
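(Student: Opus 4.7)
The plan is to mirror the proof of the preceding theorem, with $\floor{\cdot}$ replacing $\ceil{\cdot}$ and the threshold $k_i=d_i$ replaced by the softer threshold $k_i>\ceil{(n-1)/2}$. Because every vertex of $K_n$ has degree $n-1$ and $K_n$ is complete, the argument only needs to locate $\ceil{n/3}$ vertices each of which is permitted, under FDRD, to absorb at least two neighbours from $V-D$; together with those neighbours they will then account for all $n$ vertices, since $3\ceil{n/3}\ge n$.

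The key arithmetic step is to check that whenever $k_i\le \ceil{(n-1)/2}$, one has $\floor{(n-1)/k_i}\ge 2$, i.e.\ $v_i$ may legally dominate at least two vertices of $V-D$ in the FDRD regime; this is the direct analog of the fact in the CDRD proof that $k_i<d_i$ forces $\ceil{d_i/k_i}\ge 2$. Granting this, the hypothesis that at most $\floor{2n/3}$ vertices satisfy $k_i>\ceil{(n-1)/2}$ yields at least $n-\floor{2n/3}=\ceil{n/3}$ vertices with $k_i\le \ceil{(n-1)/2}$, from which I would take any subset $D$ of size $\ceil{n/3}$. Completeness of $K_n$ makes $D$ adjacent to every vertex of $V-D$, and since $|V-D|=\floor{2n/3}\le 2\ceil{n/3}=2|D|$, the allotment of at least two absorbable neighbours per vertex of $D$ is enough to distribute $V-D$ validly among the members of $D$. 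Hence $D$ is a valid FDRD-set and $\gamma_{\underline{f}}(K_n)\le \ceil{n/3}$.

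The step I expect to be delicate is the arithmetic claim when $n-1$ is odd: then $\ceil{(n-1)/2}=n/2$, and the borderline $k_i=n/2$ gives $(n-1)/k_i=2-2/n<2$, so $\floor{(n-1)/k_i}=1$ rather than $2$. The cleanest repair is to read the hypothesis in a slightly stricter form (for instance $k_i\le \floor{(n-1)/2}$), or, keeping the stated hypothesis, to exploit the slack $2\ceil{n/3}-\floor{2n/3}\in\{0,1,2\}$ and allow at most a couple of members of $D$ to absorb only one neighbour each; verifying that this residual bookkeeping still closes is where I would concentrate most of the effort.
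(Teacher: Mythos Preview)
Your approach is exactly what the paper has in mind: it states the FDRD result immediately after the CDRD theorem with only the remark that a ``similar result can be obtained'', offering no separate argument. So the mirrored CDRD proof is the intended one, and for odd $n$ it goes through verbatim, since then $\lceil (n-1)/2\rceil=(n-1)/2$ and $k_i\le (n-1)/2$ forces $\lfloor (n-1)/k_i\rfloor\ge 2$.

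Your instinct about the case $n$ even is correct, but the slack repair you sketch does not close. The difficulty is not that one or two members of $D$ may be borderline; \emph{all} of them can be. Take $n=6$, so $d_i=5$ and $\lceil (n-1)/2\rceil=3$. Assign $k_1=k_2=3$ and $k_3=k_4=k_5=k_6=4$. Then exactly four vertices have $k_i>3$, which meets the hypothesis $\lfloor 2n/3\rfloor=4$. Yet every vertex satisfies $\lfloor 5/k_i\rfloor=1$, so any set $D$ can cover only $|D|$ vertices of $V-D$; hence $\gamma_{\underline{f}}(K_6)=3>2=\lceil n/3\rceil$. Here the slack $2\lceil n/3\rceil-\lfloor 2n/3\rfloor$ equals $0$ while the shortfall is $\lceil n/3\rceil$, so no residual bookkeeping can help: the statement as printed is simply false for even $n$.

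The correction you already isolated is the right one: replace $\lceil (n-1)/2\rceil$ by $\lfloor (n-1)/2\rfloor$ in the hypothesis. With that change your mirrored argument works uniformly, since $k_i\le\lfloor (n-1)/2\rfloor$ gives $(n-1)/k_i\ge 2$ and hence $\lfloor (n-1)/k_i\rfloor\ge 2$, after which any $\lceil n/3\rceil$ such vertices form an FDRD-set exactly as in the CDRD proof.
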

	\noindent
	Here also it is not a sufficient condition since a vertex with $k_i=1$ will give $\gamma_{\underline{f}}(K_n)=1$, without considering the $k_j$ values for other vertices.
	Similar result can be obtained for the $TDRD$ number as below.
	\begin{theorem}
		For a complete graph $K_n$ if there are atmost $\big \lfloor \frac{2n}{3} \big \rfloor$ vertices with $k_i=d_i$, then $\gamma_{f_t}(K_n) \leq \ceil{ \frac{n}{3} }$.
	\end{theorem}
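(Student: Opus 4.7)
The plan is to mirror the structure of the $CDRD$ argument in the preceding theorems, exploiting the fact that in $K_n$ every vertex has degree $n-1$, so that the translate restriction simplifies to $g(d_i)=d_i-k_i+1=n-k_i$. I would first observe that $g(d_i)=1$ exactly when $k_i=d_i=n-1$, while $g(d_i)\geq 2$ whenever $k_i\leq d_i-1$, since then $n-k_i\geq 2$. Thus the hypothesis cleanly partitions $V(K_n)$ into those vertices of $TDRD$-capacity one (the ``bad'' ones, numbering at most $\lfloor 2n/3\rfloor$) and those of $TDRD$-capacity at least two (the ``good'' ones, numbering at least $n-\lfloor 2n/3\rfloor=\lceil n/3\rceil$).

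Next I would build the desired $TDRD$-set $D$ by picking any $\lceil n/3\rceil$ of the ``good'' vertices. Because $K_n$ is complete, each vertex of $D$ is adjacent to every vertex of $V\setminus D$, so adjacency imposes no further constraint on who may dominate whom; the only restriction is the per-vertex capacity of at least $2$.

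The verification then reduces to a counting check. The set $V\setminus D$ contains $n-\lceil n/3\rceil$ vertices to be dominated, while the total $TDRD$-capacity of $D$ is at least $2\lceil n/3\rceil$. Using $3\lceil n/3\rceil\geq n$, we get $2\lceil n/3\rceil\geq n-\lceil n/3\rceil$, so the vertices of $V\setminus D$ can be distributed among the members of $D$ without exceeding anyone's capacity. Hence $D$ is a valid $TDRD$-set of size $\lceil n/3\rceil$, giving the bound.

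I do not foresee a serious obstacle: the argument is a direct translation of the $CDRD$ proof with $n-k_i$ replacing $\lceil(n-1)/k_i\rceil$. The one point that needs care is the clean equivalence $k_i=d_i\Leftrightarrow g(d_i)=1$ for the translate function, but this is more immediate here than in the $FDRD$ analogue (which required the threshold condition $k_i>\lceil(n-1)/2\rceil$), so no extra casework arises.
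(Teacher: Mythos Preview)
Your argument is correct and follows exactly the approach the paper intends: the paper does not give a separate proof for the $TDRD$ case but simply indicates that the $CDRD$ argument carries over, and your proposal is precisely that translation, with the clean observation that $g(d_i)=n-k_i\geq 2$ whenever $k_i<d_i$. The explicit capacity count $2\lceil n/3\rceil\geq n-\lceil n/3\rceil$ you include is a welcome detail that the paper leaves implicit.
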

	\noindent
	Here also it is not a sufficient condition since a vertex with $k_i=1$ will give $\gamma_{f_t}(K_n)=1$, without considering the $k_j$ values for other vertices.
	
	\subsubsection{Caterpillar graph}
	If $G$ is a caterpillar graph whose spine is the path $P_n=v_1v_2\hdots v_n$, where each vertex $v_i$ in the spine has degree $d_i$ and is attached to $l_i$ leaves, then $d_i= \bigg\{
	\begin{array}{ll}
		l_i+1 & \mbox{if } i=1 \mbox{ or } n\\
		l_i+2 & \mbox{if } i=2,3,\hdots,n-1
	\end{array}$, assuming that $l_i \geq 1$, for all $i=1,2,\hdots,n$.\\
	Let $f:V \rightarrow \mathbb{N}$ is defined as $f(v_i)=k_i$ and $f(v_{i_j})=1$, where $v_{i_j}$ is the leaf attached to the vertex $v_i,\,1\leq j \leq l_i$, and $1\leq i \leq n$.
	\begin{theorem}
		If $G$ is the caterpillar graph defined as above then $\gamma_{\overline{f}}(G)=\sum\limits_{i=1,n}^{}(l_i-\ceil{ \frac{l_i+1}{k_i}} )+\sum\limits_{i=2}^{n-1}(l_i-\ceil{ \frac{l_i+2}{k_i}})+n$, provided $l_i>0,k_i>1$ and if $l_i=1$ then $k_i=3$.
	\end{theorem}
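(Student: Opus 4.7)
Write $c_i := \ceil{d_i/k_i}$ (so $c_i=\lceil(l_i+1)/k_i\rceil$ for $i\in\{1,n\}$ and $c_i=\lceil(l_i+2)/k_i\rceil$ otherwise), and note that the claimed value equals $n+\sum_i(l_i-c_i)$. The plan is to prove matching upper and lower bounds for $\gamma_{\overline{f}}(G)$.

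\emph{Step 1 (preliminary inequality).} First I would verify that $c_i\le l_i$ for every vertex in the allowed regime. A short case check splits on interior vs.\ endpoint and on $l_i=1$ vs.\ $l_i\ge 2$; the hypothesis $l_i=1\Rightarrow k_i=3$ is exactly the extra assumption needed at an interior vertex with a single leaf, where $k_i=2$ would otherwise produce $c_i=\lceil 3/2\rceil=2>l_i$, while $k_i=3$ gives $c_i=1=l_i$.

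\emph{Step 2 (upper bound).} For the construction I would place every spine vertex in $D$, together with, for each $i$, an arbitrary choice of $l_i-c_i$ of the $l_i$ leaves of $v_i$. By Step~1 this is well defined, and $|D|=n+\sum_i(l_i-c_i)$. The CDRD conditions are immediate: each excluded leaf is dominated by its spine neighbor, each spine vertex $v_i\in D$ has exactly $c_i$ neighbors outside $D$ (its $c_i$ excluded leaves, since its spine neighbors all lie in $D$), and each leaf in $D$ has no neighbors outside $D$. Hence $\gamma_{\overline{f}}(G)\le n+\sum_i(l_i-c_i)$.

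\emph{Step 3 (lower bound).} Let $D$ be an arbitrary CDRD-set. Set $I=\{i:v_i\in D\}$, let $a_i$ be the number of leaves of $v_i$ in $D$, and for $i\in I$ let $b_i\in\{0,1,2\}$ be the number of spine neighbors of $v_i$ lying outside $D$. Two observations pin down $|D|$: (i) for $i\notin I$ every leaf of $v_i$ must belong to $D$ (its only neighbor is the excluded $v_i$), so $a_i=l_i$; (ii) for $i\in I$ the CDRD restriction at $v_i$ reads $(l_i-a_i)+b_i\le c_i$, equivalently $a_i\ge l_i+b_i-c_i$ (which is already nonnegative by Step~1, so no $\max$ is required). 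Summing,
\[
|D|\;=\;|I|+\sum_{i\in I}a_i+\sum_{i\notin I}l_i\;\ge\;|I|+\sum_{i\in I}(l_i+b_i-c_i)+\sum_{i\notin I}l_i,
\]
and a direct rearrangement shows that the target $|D|\ge n+\sum_i(l_i-c_i)$ reduces to
\[
\sum_{i\in I}b_i+\sum_{i\notin I}(c_i-1)\;\ge\;0,
\]
which is obvious since $b_i\ge 0$ and $c_i\ge 1$. Combined with Step~2 this yields equality.

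\textbf{Main obstacle.} The only delicate point I foresee is Step~1: if $c_i>l_i$ were allowed for some $i$, the bound $a_i\ge l_i+b_i-c_i$ in Step~3 would have to be replaced by $\max(0,l_i+b_i-c_i)$ and the clean algebraic collapse at the end of Step~3 would fail. The hypotheses in the statement are calibrated precisely to rule this out, so after Step~1 the rest is careful bookkeeping.
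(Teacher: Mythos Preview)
Your proof is correct. The upper bound (Step~2) is exactly the paper's construction: put every spine vertex into $D$ and then add the $l_i-c_i$ surplus leaves at each $v_i$. Where you genuinely diverge is in the lower bound. The paper simply asserts that ``each $v_i$ must be a member in every CDRD-set'' and reads off the count from there; but that assertion is not literally true (one may omit a spine vertex $v_i$ and instead place all of its leaves in $D$, letting one of them dominate $v_i$). Your Step~3 sidesteps this by allowing $I\subsetneq\{1,\dots,n\}$ and carrying out a global count, reducing the desired inequality to $\sum_{i\in I}b_i+\sum_{i\notin I}(c_i-1)\ge 0$, which is immediate. So your argument is the more rigorous of the two: it actually proves minimality rather than assuming the optimal set has a particular shape. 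One small remark: your ``Main obstacle'' slightly overstates the role of Step~1 in Step~3 --- the inequality $a_i\ge l_i+b_i-c_i$ follows directly from the CDRD constraint regardless of sign, so the algebraic collapse in Step~3 goes through unconditionally; Step~1 is really only needed to make the \emph{construction} in Step~2 well defined (and hence to make the theorem's formula the true value rather than merely a lower bound).
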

	\begin{proof}
		Since $l_i>0$ for all the vertices in the spine $v_1v_2\hdots v_n$, each $v_i$ must be a member in every $CDRD$-set. Thus all the $n$ vertices in the spine are necessary in the $\gamma_{\overline{f}}$-set. In the spine, all but $v_1$ and $v_n$ can dominate $\ceil{ \frac{l_i+2}{k_i} }$ vertices and at the same time $v_1$ and $v_n$ can dominate $\ceil{ \frac{l_i+1}{k_i} }$ vertices. Let the number of pendant vertices adjacent to $v_i$ but not dominated by it, be $r_i$. Then $r_i= \bigg\{
		\begin{array}{ll}
			l_i-\ceil{ \frac{l_i+1}{k_i}} & \mbox{if } i=1 \mbox{ or } n\\
			l_i-\ceil{ \frac{l_i+2}{k_i}} & \mbox{if } i=2,3,\hdots,n-1
		\end{array}$.\\
		Any of the $r_i$ leaves adjacent to $v_i$ must be in every $CDRD$-set. So, $\gamma_{\overline{f}}(G)=\sum\limits_{i=1,n}^{}(l_i-\ceil{ \frac{l_i+1}{k_i}} )+\sum\limits_{i=2}^{n-1}(l_i-\ceil{ \frac{l_i+2}{k_i}})+n$.
	\end{proof}
	\begin{theorem}
		\label{star}
		For the star graph $K_{1,n}$, $\gamma_{\overline{f}} \geq \big \lfloor \frac{n}{2} \big \rfloor +1$ unless the central vertex has the function value 1.
	\end{theorem}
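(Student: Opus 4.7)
The plan is to split on whether the central vertex $v_0$ lies in an arbitrary $CDRD$-set $D$ of $K_{1,n}$, and bound $|D|$ from below in each case using the hypothesis $k_0\ge 2$. The opening observation is that every leaf has degree $1$, forcing $k_i=1$ and hence $g(d_i)=\lceil 1/1\rceil=1$; so each leaf in $D$ can dominate at most its unique neighbour $v_0$. This is what makes the whole argument a short dichotomy on $v_0$.

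In the first case, suppose $v_0\in D$. Then $v_0$ dominates at most $\lceil n/k_0\rceil$ leaves, and every leaf not dominated by $v_0$ has no other neighbour and must therefore itself belong to $D$. This yields $|D|\ge 1+\big(n-\lceil n/k_0\rceil\big)$. Since $k_0\ge 2$, I would use the monotonicity $\lceil n/k_0\rceil\le\lceil n/2\rceil$ together with the identity $\lfloor n/2\rfloor+\lceil n/2\rceil=n$ to conclude $|D|\ge\lfloor n/2\rfloor+1$, matching the claim.

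In the second case, suppose $v_0\notin D$. Then the only neighbour of every leaf is outside $D$, so each leaf must itself belong to $D$ in order to be dominated. Thus all $n$ leaves lie in $D$, one of them uses its unit capacity to dominate $v_0$, and $|D|=n\ge\lfloor n/2\rfloor+1$ for every $n\ge 1$.

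There is no serious obstacle: the proof is driven entirely by the fact that leaves in $K_{1,n}$ are forced to have $k_i=1$ and can dominate only $v_0$. The one place worth care is the ceiling-floor bookkeeping in the first case; a careless use of $\lceil\cdot\rceil$ can cost a $+1$ in the bound. Everything else is a one-line counting argument, and the hypothesis $k_0\ge 2$ is used exactly once, at the step $\lceil n/k_0\rceil\le\lceil n/2\rceil$, which is precisely where the case $k_0=1$ (excluded by the theorem) would collapse the lower bound to $1$.
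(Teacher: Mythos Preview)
Your proof is correct and follows essentially the same approach as the paper's: both use the bound $\lceil n/k_0\rceil\le\lceil n/2\rceil$ for $k_0\ge 2$ and the identity $n-\lceil n/2\rceil=\lfloor n/2\rfloor$ to count the leaves forced into $D$. Your explicit case split on whether the center lies in $D$ is in fact slightly more careful than the paper, which tacitly treats only the case $v_0\in D$; your Case~2 fills that small gap.
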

	\begin{proof}
		Let $v_1$ be the central vertex in the star graph $K_{1,n}$. If $f(v_1)=1$, then $\gamma_{\overline{f}}=\gamma=1$.\\
		If $f(v_1)=k_1$, where $1 < k_1 \leq n$. Since $\ceil{\frac{n}{k_1} } \leq \ceil{\frac{n}{2} }$, $v_1$ can dominate atmost $\ceil{\frac{n}{2} }$ remaining vertices of the graph. Thus any $CDRD$-set will contain atleast $n-\ceil{\frac{n}{2} }$ vertices of the graph other than $v_1$. Hence $\gamma_{\overline{f}} \geq \big \lfloor \frac{n}{2} \big \rfloor +1$ for any function $f:V \rightarrow \mathbb{N}$.
	\end{proof}
	\noindent
	Let $\mathfrak{C}_2$ denotes the collection of caterpillars whose leaves are attached only with the vertices $v_i$, where $i\equiv 2(\mbox{mod }3)$ on the spine $P_n=v_1v_2\hdots v_n$. Fig. \ref{fig:graph-2} is an example for a caterpillar in the class $\mathfrak{C}_2$.
	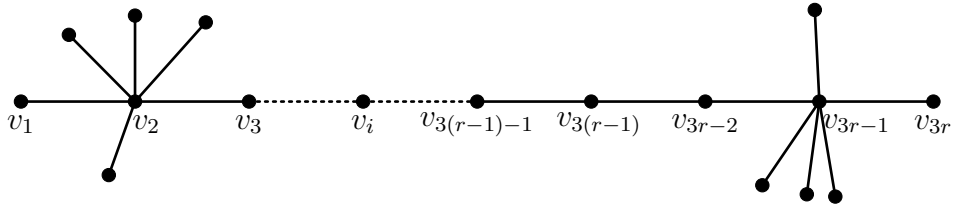
\begin{figure}[H]
		\centering
		\begin{tikzpicture}[line cap=round,line join=round,>=triangle 45,x=0.75cm,y=0.75cm]
			\clip(-5.,0.) rectangle (13.,4.);
			\draw [line width=1.pt] (-4.,2.)-- (-2.,2.);
			\draw [line width=1.pt] (6.,2.)-- (8.,2.);
			\draw [line width=1.pt] (8.,2.)-- (10.,2.);
			\draw [line width=1.pt] (-2.,3.52)-- (-2.,2.);
			\draw [line width=1.pt] (-2.,2.)-- (-0.76,3.4);
			\draw [line width=1.pt] (-3.16,3.18)-- (-2.,2.);
			\draw [line width=1.pt] (-2.,2.)-- (-2.46,0.7);
			\draw [line width=1.pt] (9.,0.52)-- (10.,2.);
			\draw [line width=1.pt] (10.,2.)-- (9.78,0.36);
			\draw [line width=1.pt] (10.28,0.32)-- (10.,2.);
			\draw [line width=1.pt] (10.,2.)-- (12.,2.);
			\draw [line width=1.pt] (10.,2.)-- (9.92,3.62);
			\draw [line width=1.pt] (-2.,2.)-- (0.,2.);
			\draw [line width=1.pt,dotted] (0.,2.)-- (2.,2.);
			\draw [line width=1.pt,dotted] (2.,2.)-- (4.,2.);
			\draw [line width=1.pt] (4.,2.)-- (6.,2.);
			\begin{scriptsize}
				\draw [fill=black] (-2.,2.) circle (2.5pt);
				\draw[color=black] (-1.8,1.6) node {{\normalsize $v_2$}};
				\draw [fill=black] (6.,2.) circle (2.5pt);
				\draw[color=black] (6.14,1.6) node {{\normalsize $v_{3(r-1)}$}};
				\draw [fill=black] (8.,2.) circle (2.5pt);
				\draw[color=black] (8,1.6) node {{\normalsize $v_{3r-2}$}};
				\draw [fill=black] (10.,2.) circle (2.5pt);
				\draw[color=black] (10.65,1.6) node {{\normalsize $v_{3r-1}$}};
				\draw [fill=black] (-4.,2.) circle (2.5pt);
				\draw[color=black] (-4,1.6) node {{\normalsize $v_1$}};
				\draw [fill=black] (-2.,3.52) circle (2.5pt);
				\draw [fill=black] (-0.76,3.4) circle (2.5pt);
				\draw [fill=black] (-3.16,3.18) circle (2.5pt);
				\draw [fill=black] (-2.46,0.7) circle (2.5pt);
				\draw [fill=black] (9.78,0.36) circle (2.5pt);
				\draw [fill=black] (10.28,0.32) circle (2.5pt);
				\draw [fill=black] (12.,2.) circle (2.5pt);
				\draw[color=black] (12,1.6) node {{\normalsize $v_{3r}$}};
				\draw [fill=black] (9.,0.52) circle (2.5pt);
				\draw [fill=black] (9.92,3.62) circle (2.5pt);
				\draw [fill=black] (0.,2.) circle (2.5pt);
				\draw[color=black] (0,1.6) node {{\normalsize $v_3$}};
				\draw [fill=black] (2.,2.) circle (2.5pt);
				\draw[color=black] (2.,1.6) node {{\normalsize $v_i$}};
				\draw [fill=black] (4.,2.) circle (2.5pt);
				\draw[color=black] (4,1.6) node {{\normalsize $v_{3(r-1)-1}$}};
			\end{scriptsize}
		\end{tikzpicture}
		\caption[$G_2$]{Graph $G \in \mathfrak{C}_2$}
		\label{fig:graph-2}
	\end{figure}
	
	\begin{theorem}
		If $G \in \mathfrak{C}_2$ with $d(v_i)=n_i$ and $f(v_i)=k_i$, then $\gamma_{\overline{f}}(G) \in \{k:k=\sum\limits_{\substack{i=1 \\ i\equiv 2 (\mbox{mod }3)}}^{n}n_i-\ceil{ \frac{n_i}{k_i} } +1\}$.
	\end{theorem}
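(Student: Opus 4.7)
The plan is to reduce the problem to counting the non-hub vertices that any $CDRD$-set is forced to contain, once every hub has been shown to be compulsory. Write $h_k = v_{3k-1}$ for the $k$-th hub and let $r$ be the total number of hubs, so that $n = 3r$ as in Figure \ref{fig:graph-2}.

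First I would argue that every hub $h_k$ lies in each $\gamma_{\overline{f}}$-set. By the definition of $\mathfrak{C}_2$, each such $h_k$ carries at least one pendant leaf whose only neighbour is $h_k$ itself, so a standard swap argument (in the spirit of the proof of the previous caterpillar theorem) shows that $h_k \in D$ for any minimum $CDRD$-set $D$. This yields the $r$ hubs as a forced contribution.

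Next, a hub $h_k$ can dominate at most $\ceil{\frac{n_k}{k_k}}$ of its $n_k$ neighbours, so exactly $n_k - \ceil{\frac{n_k}{k_k}}$ of those neighbours must be dominated by other means. An uncovered leaf has no other neighbour and is forced into $D$, while an uncovered non-hub spine vertex either enters $D$ or is dominated by its unique adjacent non-hub spine neighbour. The structural feature special to $\mathfrak{C}_2$ is that between successive hubs the two non-hub spine vertices $v_{3k}$ and $v_{3k+1}$ form an edge: if both are left uncovered by their respective hubs, then placing just one of them in $D$ suffices to dominate the other, its capacity $\ceil{\frac{2}{k_i}}\ge 1$ being enough. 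Arranging each hub's coverage to realise this saving at every one of the $r-1$ interior spine gaps reduces the naive total $r+\sum_k(n_k-\ceil{\frac{n_k}{k_k}})$ by exactly $r-1$, yielding the claimed value $\sum_k (n_k-\ceil{\frac{n_k}{k_k}})+1$.

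The matching lower bound is the delicate part. My plan is to partition $V$ into the $r$ closed neighbourhoods of the hubs (which, because every non-hub is adjacent to exactly one hub, do form a partition of the vertex set), count the contribution of each block, and then argue that the only mechanism for saving a vertex beyond the block-by-block bound is the cross-block pairing of $v_{3k}$ with $v_{3k+1}$ at each of the $r-1$ interior gaps. The main obstacles will be the two pendant spine ends $v_1$ and $v_n$ (which have no cross-block partner and hence can never be the source of a saving) and hubs whose capacity $\ceil{\frac{n_k}{k_k}}$ is large enough to force them to cover both their spine neighbours, destroying the potential saving at the adjacent gap; handling these borderline cases cleanly is the step I expect to require the most care.
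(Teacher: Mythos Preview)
Your proposal has two structural problems that compound.

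\textbf{Wrong target.} The paper's argument treats each hub $v_i$ together with its $n_i$ neighbours as a star $K_{1,n_i}$ and sums the star values $n_i-\big\lceil n_i/k_i\big\rceil+1$; the $+1$ is inside the summation, one copy per hub. Your entire ``saving by $r-1$'' mechanism is built to land on $\sum_k\big(n_k-\lceil n_k/k_k\rceil\big)+1$, which is a different number. If you keep all hubs in $D$ and add, for each hub, the required $n_k-\lceil n_k/k_k\rceil$ neighbours, you get exactly the paper's value with no saving possible --- see the next point.

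\textbf{The ``hubs are compulsory'' step fails, and so does the saving.} The swap you invoke does not go through for CDRD. If $h_k\notin D$ then every pendant leaf of $h_k$ is forced into $D$; swapping one such leaf $p$ out for $h_k$ moves $p$ into $N(h_k)\setminus D$, which may then exceed the cap $\lceil n_k/k_k\rceil$. Concretely, take $r=2$ (spine $v_1v_2\cdots v_6$), one leaf $w$ at $v_2$, one leaf $u$ at $v_5$, and $k_2=k_5=3$: then $D=\{v_1,w,v_4,v_5,v_6\}$ is a valid CDRD-set of size $5$ with $v_2\notin D$, while any CDRD-set containing both hubs has size at least $6$ (each hub forces two of its three neighbours into $D$). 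So no minimum CDRD-set contains both hubs here. Your upper-bound saving is confused for the same reason: the constraint on a hub $h_k\in D$ is $|N(h_k)\setminus D|\le\lceil n_k/k_k\rceil$, so at least $n_k-\lceil n_k/k_k\rceil$ of its neighbours must actually lie in $D$, regardless of whether they are also adjacent to something else in $D$. Leaving $v_{3k+1}$ out of $D$ does not relieve $h_{k+1}$ of that requirement; you would have to insert an extra leaf in its place, with no net gain.

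\textbf{Comparison with the paper.} The paper does none of this bookkeeping. It partitions $V$ into the closed neighbourhoods $N[v_i]$ for $i\equiv 2\pmod 3$ (a genuine partition, since every non-hub is adjacent to exactly one hub), applies the star result of Theorem~\ref{star} to get $n_i-\lceil n_i/k_i\rceil+1$ on each block, and sums. Your block partition in the lower-bound paragraph is the same decomposition, but the per-block count you should be reading off is the star value, not a quantity to be reduced afterwards.
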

	\begin{proof}
		Each $v_i$, where $i \equiv 2(\mbox{mod }3)$ together with the leaves attached to it forms a star graph $K_{1,n_i}$. So in Theorem \ref{star}, $n_i-\ceil{ \frac{n_i}{k_i} } +1$ vertices are required to dominate that star. Hence to dominate the entire graph we have to consider $CDRD$-set of each star centered at $v_i$, where $i \equiv 2(\mbox{mod }3)$.
	\end{proof}
	\subsubsection{Path and Cycle}
	\begin{theorem}
		If $P_n$ is a path of order $n$, $\ceil{ \frac{n}{3} } \leq \gamma_{\overline{f}}(P_n) \leq \ceil{ \frac{n}{2} }$.
	\end{theorem}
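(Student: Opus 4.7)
The plan is to prove the two inequalities separately, with the lower bound essentially free and the upper bound requiring a single explicit construction.

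For the lower bound $\gamma_{\overline{f}}(P_n) \geq \ceil*{n/3}$, I would simply invoke Theorem \ref{thm}, which gives $\gamma(P_n) \leq \gamma_{\overline{f}}(P_n)$, together with the classical formula $\gamma(P_n) = \ceil*{n/3}$ for the ordinary domination number of a path. This handles the left-hand inequality in one step and is independent of the function $f$.

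For the upper bound I would exhibit a single CDRD-set whose size is $\ceil*{n/2}$ and which works for every admissible $f$. Label the vertices along the path as $v_1, v_2, \ldots, v_n$ and take $D$ to consist of all odd-indexed vertices $v_1, v_3, v_5, \ldots$, so that $|D| = \ceil*{n/2}$. To witness that $D$ is a CDRD-set, I would assign each even-indexed vertex $v_{2i} \in V - D$ to its neighbor $v_{2i-1} \in D$. Under this assignment every vertex of $D$ is responsible for at most one vertex of $V - D$ (with $v_n$ responsible for none when $n$ is odd). Because every admissible function value satisfies $1 \leq k_i \leq d_i$ and hence $\ceil*{d_i/k_i} \geq 1$, the CDRD capacity at each vertex of $D$ is respected no matter what values $f$ prescribes, so $D$ is a valid CDRD-set and $\gamma_{\overline{f}}(P_n) \leq \ceil*{n/2}$.

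The lower bound is essentially an application of a prior result. The main substantive step is the upper bound, and the only subtlety there is choosing a construction that is manifestly uniform in $f$: once we pick $D$ to be the odd-indexed vertices and commit to the ``pair every even vertex with its left neighbor'' assignment, every dominator covers at most one vertex, and the verification collapses to the trivial inequality $\ceil*{d_i/k_i} \geq 1$. I do not foresee any real obstacle; the work is mostly making this uniformity transparent.
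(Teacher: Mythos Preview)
Your proposal is correct and follows essentially the same approach as the paper: the lower bound via $\gamma(P_n)=\lceil n/3\rceil$ together with Theorem~\ref{thm}, and the upper bound via the odd-indexed vertices forming a CDRD-set of size $\lceil n/2\rceil$. If anything, your version is a touch cleaner, since you make explicit that the odd-indexed set works uniformly for every admissible $f$ (because $\lceil d_i/k_i\rceil\ge 1$), whereas the paper phrases the upper bound by passing through the extremal case $k_i=d_i$.
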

	\begin{proof}
		In a path $P_n\,=\,v_1v_2\hdots v_n$, degree of the vertex $v_i$ be $d_i$ and\\ $d_i= \bigg\{
		\begin{array}{ll}
			1 & \mbox{if } i=1 \mbox{ or } n\\
			2 & \mbox{if } i=2,3,\hdots,n-1
		\end{array}$.\\
		Thus $k_i= \bigg\{
		\begin{array}{ll}
			1 & \mbox{if } i=1 \mbox{ or } n\\
			1 \mbox{ or }2 & \mbox{if } i=2,3,\hdots,n-1
		\end{array}$. Hence $v_1$ and $v_n$ can dominate exactly one vertex and all other vertices will dominate 1 or 2 vertices with respect to the $k_i$ values 2 or 1 respectively. If each vertex $v_i$ has $k_i=d_i$, then each vertex can dominate exactly one other vertex and in such case, those vertices with odd index will form a $CDRD$-set. So, $\gamma_{\overline{f}}(P_n) \leq \ceil{ \frac{n}{2} }$. Also $\gamma(P_n)=\ceil{ \frac{n}{3} }$ and $\gamma(G) \leq \gamma_{\overline{f}}(G)$ for any graph $G$. Thus $\ceil{ \frac{n}{3} } \leq \gamma_{\overline{f}}(P_n) \leq \ceil{ \frac{n}{2} }$.
	\end{proof}
	\begin{theorem}
		If $C_n$ is a cycle of order $n$, $\ceil{ \frac{n}{3} } \leq \gamma_{\overline{f}}(C_n) \leq \ceil{ \frac{n}{2} }$.
	\end{theorem}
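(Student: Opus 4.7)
The plan is to mirror the preceding theorem for $P_n$ and split the statement into two independent inequalities.

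For the lower bound I would simply appeal to Theorem~\ref{thm}, which gives $\gamma(C_n)\leq \gamma_{\overline{f}}(C_n)$, together with the well-known value $\gamma(C_n)=\lceil n/3\rceil$; this immediately yields $\lceil n/3\rceil \leq \gamma_{\overline{f}}(C_n)$, with no dependence on the function $f$.

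For the upper bound I would exploit that $C_n$ is $2$-regular, so $d_i=2$ for every $i$, and the only possible function values are $k_i\in\{1,2\}$. Consequently $g(d_i)=\lceil 2/k_i\rceil\in\{1,2\}$, and in particular $g(d_i)\geq 1$ no matter what $f$ is. The construction I would propose is uniform in $f$: label the cycle $v_1v_2\cdots v_nv_1$ and take
\[
D \;=\; \{v_i : 1\leq i\leq n,\ i\text{ odd}\},
\]
whose cardinality is $\lceil n/2\rceil$. I would then verify the two CDRD requirements by an explicit assignment: each vertex $v_{2j}\in V-D$ is declared to be dominated by its left neighbour $v_{2j-1}\in D$. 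In this way every odd-indexed vertex in $D$ is responsible for at most one vertex of $V-D$, which is admissible because $g(d_i)\geq 1$.

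The only step that could look delicate is the odd-$n$ case, where $D$ contains both $v_1$ and $v_n$ and these two vertices are adjacent in the cycle. I would point out that the CDRD definition restricts only the number of neighbours a $D$-vertex covers in $V-D$, so adjacencies inside $D$ are irrelevant; under the assignment above the "leftover" odd vertex $v_n$ simply dominates zero vertices of $V-D$, which is within its quota. Therefore $D$ is a valid CDRD-set for every function $f$, giving $\gamma_{\overline{f}}(C_n)\leq \lceil n/2\rceil$ and completing the proof. I do not expect a genuine obstacle here; the argument is essentially the same as for $P_n$, with the endpoint vertices replaced by the one extra wrap-around vertex when $n$ is odd.
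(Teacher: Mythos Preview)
Your argument is correct. The paper actually states this theorem without proof, treating it as an immediate analogue of the preceding result for $P_n$; your lower bound via $\gamma(C_n)=\lceil n/3\rceil$ together with Theorem~\ref{thm}, and your upper bound via the odd-indexed vertices with the assignment $v_{2j}\mapsto v_{2j-1}$, is exactly the intended adaptation, including the correct handling of the wrap-around vertex $v_n$ when $n$ is odd.
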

	\begin{definition}[Restricted radius]
		Let $G=(V,E)$ be any graph and $S \subseteq V$. We can define the radius of $G$ restricted to $S$ as $\mbox{rad}_S(G)=\underset{u,v \in S}{min}\{d_G(u,v)\}$.
	\end{definition}
	\noindent
	Clearly, if $S_1 \subseteq S_2$, then $\mbox{rad}_{S_1}(G) \geq \mbox{rad}_{S_2}(G)$.
	\begin{theorem}
		For a cycle $C_n$ of order n with the given function f, if $\mbox{rad}_{S_1}(G)\geq3$ where $S_1$ is the collection of vertices mapping to 1 by the function f, then $\gamma_{\overline{f}}(C_n)\leq n-2|S_1|$.
	\end{theorem}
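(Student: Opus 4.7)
My plan is constructive. Write $S_1=\{u_1,\ldots,u_s\}$ with $s=|S_1|$, and for each $u_j$ let $u_j',u_j''$ denote its two cycle-neighbours. I propose to take
\[
D \;=\; V(C_n)\setminus\bigcup_{j=1}^{s}\{u_j',u_j''\}
\]
as the target CDRD-set. The hypothesis $\mbox{rad}_{S_1}(C_n)\geq 3$ says that any two distinct members of $S_1$ are at cycle-distance at least $3$, so the closed neighbourhoods $N[u_j]$ and $N[u_\ell]$ are disjoint whenever $j\neq\ell$. Consequently the $2s$ listed vertices $u_j',u_j''$ are pairwise distinct and none of them coincides with any $u_j$, giving $S_1\subseteq D$ and $|D|=n-2s$.

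Next I would verify that $D$ is a CDRD-set. By construction every vertex of $V-D$ is of the form $u_j'$ or $u_j''$ for a unique index $j$, and is adjacent to $u_j\in D$; I assign each such vertex to $u_j$. Since $u_j\in S_1$ we have $k_j=1$, and $d_j=2$ because $C_n$ is $2$-regular, so the capacity at $u_j$ is $g(d_j)=\lceil 2/1\rceil=2$, which exactly accommodates its two assignees $u_j',u_j''$. The remaining members of $D\setminus S_1$ receive no assignees, so their capacity constraints are vacuous (as used already in the worked example of Fig.~\ref{fig:graph-1}, the CDRD condition is on a chosen assignment, not on the raw count of neighbours in $V-D$). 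This furnishes a valid CDRD-assignment, so $\gamma_{\overline f}(C_n)\leq |D|=n-2|S_1|$.

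There is no genuine obstacle here beyond choosing the right $D$. The distance hypothesis $\mbox{rad}_{S_1}(C_n)\geq 3$ does double duty: it forces the closed neighbourhoods $N[u_j]$ to be pairwise disjoint (so the excluded set has size exactly $2|S_1|$), and it guarantees that each $u_j$ only has to absorb its own two neighbours into its capacity of $2$, rather than competing with some other $u_\ell$ whose neighbourhood overlaps. If one wanted a stronger bound one would have to argue more carefully about vertices in $D\setminus S_1$ whose $k_i=2$ forces capacity $1$, but for the inequality stated this refinement is unnecessary.
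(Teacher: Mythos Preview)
Your proof is correct and follows essentially the same construction as the paper: take $D=V(C_n)\setminus\bigcup_j\{u_j',u_j''\}$, use the distance hypothesis to guarantee the $2|S_1|$ removed vertices are pairwise distinct and disjoint from $S_1$, and let each $u_j\in S_1$ absorb its two neighbours within its capacity $\lceil 2/1\rceil=2$. Your explicit remark that the CDRD condition is on a chosen assignment rather than on the raw neighbour count in $V-D$ is in fact more careful than the paper's own argument, which leaves that point implicit.
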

	\begin{proof}
		For any $v_i \in V(C_n)$, $d_i=2$ and so $k_i=1 \mbox{ or }2$. Let $S_1=\{v_i \in V(C_n):f(v_i)=1\}$. If $\mbox{rad}_{S_1}(G)\geq3$, there exist vertices in $V$ with both the function values 1 and 2. All the vertices in $S_1$ can dominate both of its neighbours and thus, the remaining vertices which are not being dominated are only those with $k_i=2$, that is, $n-3|S_1|$ in number. All these vertices together with those in $S_1$ will form a $CDRD$-set. Hence $\gamma_{\overline{f}}(C_n)\leq n-2|S_1|$.\\
		If $\mbox{rad}_{S_1}(G)<3$, then can form a new set $S$ from $S_1$ by deleting necessary vertices so that $\mbox{rad}_{S}(G)\geq3$. Then $\gamma_{\overline{f}}(C_n)\leq n-2|S|$.
	\end{proof}
	
	\section{Conclusion}
	
		In this paper, some new generalized forms of domination were introduced by restricting the number of vertices a vertex can dominate. The newly defined variations of dominations are Ceil Degree Restricted Domination, Floor Degree Restricted Domination and Translate Degree Restricted Domination. As the name indicates, in each type of domination a vertex dominates a particular number of vertices as the given function indicates. Some bounds for the Degree Restricted Domination number have been discussed in this paper and also Degree Restricted Domination is studied for some particular classes of graphs like the complete graph $K_n$, paths, cycles and the caterpillars.


\begin{thebibliography}{}
		\bibitem{haynes1998fundamentals}
		Teresa W Haynes, Stephen T Hedetniemi, and Peter J Slater. Fundamentals of domination in graphs. Marcel Dekker Inc., New York, 1998.
		
		\bibitem{haynes1998domination}
		Teresa W Haynes, Stephen T Hedetniemi, and Peter J Slater. Domination in graphs: Volume 2: advanced topics. Marcel Dekker Inc., New York, 1998.
		
		\bibitem{kamath20162drd}
		S S Kamath, A Senthil Thilak, and M Rashmi. 2-part degree restricted domination in graphs. In IWCAAM'16, pp 211-221. SHANLAX, 2016.
		
		\bibitem{kamath2019relation}
		S S Kamath, A Senthil Thilak, and M Rashmi. Relation between k-DRD and dominating set. In Applied Mathematics and Scientific Computing, pp 563-572. Springer, 2019.
		
		\bibitem{west2001introduction}
		West, D.~B,  Introduction to graph theory, Pearson Education, India, 2nd edition (2001).
	\end{thebibliography}
\end{document}